\documentclass[12pt]{article}
\usepackage{amsfonts, amssymb, amsmath, amsthm, xcolor, xspace}

\newcommand{\R}{{\mathbb R}}
\newcommand{\C}{{\mathbb C}}
\newcommand{\N}{{\mathbb N}}

\newcommand{\Cov}{\operatorname{Cov}}
\newcommand{\U}{{\mathcal U}}
\renewcommand{\u}{\mathfrak{u}}
\newcommand{\chiop}{\chi_{\mathrm{op}}}
\newcommand{\st}{\mbox{St}}

\theoremstyle{plain}
\newtheorem{lemma}{Lemma}
\newtheorem{theorem}{Theorem}
\newtheorem{corollary}{Corollary}
\newtheorem{proposition}{Proposition}
\theoremstyle{definition}
\newtheorem{definition}{Definition}
\newtheorem{question}{Question}

\theoremstyle{remark}
\newtheorem{remark}{Remark}
\newcommand{\tempout}[1]{{}}

\title{A Note on Compactness and Clique Size}
\author{David V.\ Feldman \\
{\small University of New Hampshire, Durham} \\[4pt]
  Alexander Wilce \\
  {\small Susquehanna University}
  }
\date{August 8, 2026}

\begin{document}

\maketitle

\begin{abstract} 
Say that a topological space $X$ has {\em finite}, respectively {\em bounded cliques}  
iff every closed, irreflexive binary relation --- equivalently, every closed, loop-free directed graph --- on $X$ has cliques of finite, respectively bounded finite, size. Every compact space has bounded cliques. Having finite cliques implies limit-point compactness, and is implied by $\omega$-limit point compactness (equivalently, countable compactness). Thus, for $T_1$ spaces, having finite cliques is equivalent to countable compactness. Having bounded cliques is strictly weaker than compactness. Indeed, any space $X$ 
such that $X^{\omega}$ is countably %sequentially 
compact has bounded cliques. However, we have found no example of a countably compact space having finite but unbounded cliques. The existence of such a space is the major open problem raised in this note. 
\end{abstract}

\section{Cliques of closed relations}

In the following, $X$ is a topological space 
and $R \subseteq X \times X$ is an {\em orthogonality relation:} a symmetric, irreflexive binary relation, {\em closed in $X^2$} unless otherwise stated. If $X$ is finite, this is just another way to say that $(X,R)$ is a loop-free, undirected graph, but our interest is mainly in the infinite case. The motivating example: $X$ is the unit sphere of $\R^{n}$ or $\C^{n}$, and $R$ is literal orthogonality. By an {\em $R$-clique} in $X$ we mean a pairwise $R$-related subset of $X$. Equivalently, $A \subseteq X$ is a clique iff $A \times A \setminus \Delta_{A} \subseteq R$.\footnote{If $R$ is irreflexive but not necessarily symmetric, we understand cliques of $R$ to be 
those of the symmetrized version, $R \cup R^{t}$, 
which is also irreflexive, and closed if $R$ is 
closed. Thus, we lose nothing by restricting attention 
to the symmetric case.}

\begin{definition}
$R$ has {\em finite cliques} iff all $R$-cliques are finite, and {\em bounded cliques} iff there exists a natural number $n$ with $|A| \leq n$ for all $R$-cliques $A$. \vspace{-.1in}
\end{definition}

One of us stumbled on the following about 20 years ago on the way to some other results { \cite{WiTT, Wi}}:

\begin{lemma}\label{compactbounded}
Suppose $X$ is compact and $R$ is closed in $X^2$. Then $R$ has bounded cliques.
\end{lemma}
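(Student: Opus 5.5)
The plan is to use compactness to cover $X$ by finitely many open sets, none of which contains two $R$-related points. A clique can then meet each such set in at most one point. No separation axioms are assumed, so the argument will use only the definition of the product topology and compactness of $X$.

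First, since $R$ is closed and irreflexive, its complement $X^2 \setminus R$ is an open set containing the diagonal $\Delta_X$. For each $x \in X$ we have $(x,x) \notin R$. By the definition of the product topology there are open sets $U_x, V_x \ni x$ with $U_x \times V_x \cap R = \emptyset$. Put $W_x = U_x \cap V_x$, an open neighborhood of $x$ with $W_x \times W_x \subseteq X^2 \setminus R$. Thus no two points of $W_x$ are $R$-related, distinct or not.

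Second, the sets $\{W_x\}_{x\in X}$ form an open cover of $X$, so compactness gives finitely many of them, say $W_{x_1},\dots,W_{x_n}$, that cover $X$. Now let $A$ be any $R$-clique. If $|A| > n$, the pigeonhole principle gives two distinct points $a \ne b$ of $A$ lying in a common $W_{x_i}$. Then $(a,b) \in W_{x_i}\times W_{x_i}$, so $(a,b)\notin R$, contradicting that $A$ is a clique. Hence $|A| \le n$ for every clique. The bound $n$ depends only on $R$ and the chosen subcover, not on $A$, so $R$ has bounded cliques.

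No step is a real obstacle. The only point needing care is the first step, getting a single neighborhood $W_x$ with $W_x \times W_x$ disjoint from $R$ rather than a general open neighborhood of $(x,x)$. This is handled by taking $W_x = U_x \cap V_x$ from a basic open box around $(x,x)$.
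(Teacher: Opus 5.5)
Your proof is correct and matches the paper's argument step for step: you take an $R$-free box $U\times V$ around $(x,x)$, shrink it to $W=U\cap V$, extract a finite subcover by compactness, and conclude by pigeonhole that every clique has at most $n$ points. As the paper also notes, no separation axioms are needed.
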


\begin{proof}
Let $x \in X$. Since $R$ is irreflexive, $(x,x) \not\in R$. Since $R$ is closed, there exist open sets $U$ and $V$ of $X$ such that $(x,x) \in U \times V$ and $(U \times V) \cap R = \emptyset$. Let $W = U \cap V$, noting that $x \in W$ and $(W \times W) \cap R = \emptyset$. In other words, every point of $X$ has a totally $R$-unrelated open neighborhood. Since $X$ is compact, finitely many of these cover $X$. A clique can contain at most one point from each of these finitely many sets.
\end{proof}

Lemma 1 is to some extent folkloric. 
If $R$ is a nonempty closed orthogonality on a compact 
metric space $X$, then since $R$ is itself compact, 
the metric takes a minimum value $\epsilon > 0$ on $R$ (non-zero since $d$ is zero only on the diagonal, which $R$ misses). Every $R$-clique is thus $\epsilon$-separated (in the non-strict sense: $d(x,y) \geq \epsilon$), and so has size at most the  $\epsilon/2$ packing number of $X$. For compact Hausdorff spaces, Lemma 1 follows from the fact that the complements of orthogonalities are precisely the open symmetric entourages for the unique compatible uniformity (which 
consists of all neighborhoods of the diagonal). Any clique $A$ for the complement of such an entourage $E$ is $E$-discrete ($A \times A \cap E \subseteq \Delta$), and $E$-discrete sets, for a fixed entourage $E$, in a totally bounded uniform space are boundedly finite.\footnote{This is easy to show, but not easy to find in the textbook literature, so we supply a 
short proof. Suppose $A$ is $E$-discrete. Choose a smaller symmetric entourage $D$ with $D \circ D \subseteq E$: since $X$ is totally bounded, there is some finite set $F \subseteq X$ with $X \, = \, D[F] \, := \, \bigcup_{a \in F} D[a]$. If $x,y \in A \cap D[a]$, then $(a,x), (a,y) \in D$; as $D = D^{-1}$, we also have $(x,a) \in D$, whence $(x,y) \in D \circ D \subseteq E$. But then, as $A$ is $E$-discrete, $x = y$. Thus, each set $D[a]$ contains at most one point of $A$, whence, $|A| \leq |F|$.}

Lemma 1 also appears in the literature on topological packing graphs \cite{dLV} (where the statement is that compactness of the vertex set implies a finite independence number). What the proof of Lemma 1 adds is the observation that no separation axioms, and thus, no 
uniform or metric structure, is required. \\

\begin{remark}\label{lemmaremarks} 
(i) Lemma 1 proves more than boundedness. Defining the {\em open chromatic number} $\chiop(R)$ as the least cardinality of a cover of $X$ by $R$-free open sets, the proof shows $\chiop(R)$ is finite when $X$ is compact, and evidently every clique $A$ satisfies $|A| \le \chiop(R)$. We return to this in \S 5. 

(ii) In descriptive graph combinatorics, a 
closed graph is usually understood to be one 
closed in $X^2 \setminus \Delta$ (see, e.g., 
\cite{AZ}). Our requirement, 
closure in $X^2$, is much stronger than this. 
In some measure, this note discusses what  
this additional strength buys.
\end{remark}

\begin{definition}  A topological space $X$ has  
{\em finite}, respectively {\em bounded cliques}, 
iff every closed orthogonality relation $R$ on $X$ has finite, respectively bounded, cliques.   \end{definition} 

It will sometimes also be convenient to refer to the property of having finite, resp. bounded, cliques as the {\em finite clique property} (FCP) or the {\em bounded clique property} (BCP). In this lingo, Lemma 1 says that compact spaces have the BCP.

We recall that an {\em $\omega$-limit point} of a set $A \subseteq X$ is a point $x \in X$,
every open neighborhood of which has infinite intersection with $A$. A space is {\em $\omega$-limit point compact} iff every infinite set has an $\omega$-limit point. In a $T_1$ space, ordinary limit points are also $\omega$-limit points, and $\omega$-limit point compactness is equivalent to ordinary 
limit-point compactness. For arbitrary spaces, 
it is equivalent to countable compactness.
\footnote{Again, easy to show, but not easy to find, 
so again we supply a short proof. If $X$ is countably compact and $A \subseteq X$ is infinite, we can choose 
a countable subset $\{a_n \mid n \in \N\}$ of $A$. 
Let $F_n$ be the closure of $\{a_k \mid k \geq n\}$, 
and note that these are nested, so countable compactness yields a point $a \in  \bigcap_{n} F_n$, and this is an $\omega$-limit point of $A$. Conversely, suppose $X$ is not countably compact, and let $\{U_n\}$ be a countable open cover with no finite subcover. Then $\{U_n\}$ is infinite. For each $n > 1$, let $V_{n} :=  \bigcup_{k =1 }^{n} U_k$. Then we have an increasing sequence of proper open sets with union $X$. Select $x_n \not \in V_n$. Nestedness implies $x_i \not\in V_n$ whenever $i \geq n$. Since every point of $X$ lies in some $V_N$, and hence in every $V_n$ with $n \geq N$, no point can equal $x_n$ for arbitrarily large $n$; thus $A := \{x_n\}$ is infinite. Moreover, $V_n \cap A \subseteq \{x_1, \ldots, x_{n-1}\}$, a finite set. As every point of $X$ has some $V_n$ as a neighborhood, $A$ has no $\omega$-limit point.}

\begin{lemma}\label{finitecc} For any $X$, 
\begin{itemize} 
\item[(a)] If $X$ has finite cliques, it is limit point compact. 
\item[(b)] If $X$ is $\omega$-limit point compact (equivalently, countably compact), it has finite cliques.
\end{itemize} 
\vspace{-.1in}
Hence, a $T_1$ space has finite cliques iff it is 
limit-point compact iff it is countably compact. 
\end{lemma}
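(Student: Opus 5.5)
For (a) I would argue by contraposition: if $X$ is not limit point compact, I will build a closed orthogonality with an infinite clique. Suppose $A\subseteq X$ is infinite and has no limit point. Then every subset of $A$ is closed: any point of $\overline{B}\setminus B$ with $B\subseteq A$ would be a limit point of $A$. Likewise $A$ is discrete in the subspace topology, since a point $a\in A$ all of whose neighborhoods meet $A\setminus\{a\}$ would be a limit point of $A$. Put $R := (A\times A)\setminus\Delta_A$. This relation is symmetric and irreflexive, and $A$ is an infinite $R$-clique. The remaining task is to show $R$ is closed in $X^2$.

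Take $(x,y)\notin R$. If $x\notin A$, then $x$ is not a limit point of $A$, so $x$ has an open neighborhood $U$ with $U\cap A=\emptyset$; then $U\times X$ is a neighborhood of $(x,y)$ missing $R$. The case $y\notin A$ is symmetric. If $x=y=a\in A$, discreteness of $A$ gives an open $U\ni a$ with $U\cap A=\{a\}$. Then $(U\times U)\cap (A\times A)=\{(a,a)\}$, which misses $R$. Hence $R$ is closed.

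For (b), let $X$ be $\omega$-limit point compact and suppose, for contradiction, that $R$ is a closed orthogonality with an infinite clique $A$. Let $x$ be an $\omega$-limit point of $A$. As in the proof of Lemma~\ref{compactbounded}, there is an open $W\ni x$ with $(W\times W)\cap R=\emptyset$. But $W\cap A$ is infinite, so it contains two distinct points $a\neq b$, and then $(a,b)\in R\cap (W\times W)$, a contradiction. This step is immediate; the real work was showing that $\omega$-limit point compactness is the right hypothesis, which the footnote already handles.

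For the final sentence, in a $T_1$ space limit points are $\omega$-limit points, so limit point compactness coincides with $\omega$-limit point compactness, which equals countable compactness by the footnote. The chain (a) $\Rightarrow$, then (b) $\Leftarrow$, then closes the equivalence. The only delicate step is the closedness check in (a), specifically the diagonal case. That case needs discreteness of $A$, which comes from the absence of limit points (in the non-$\omega$ sense), so (a) uses plain limit points rather than $\omega$-limit points.
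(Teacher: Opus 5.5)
Your proposal is correct and follows the paper's proof. In (a) you use the same relation $R=(A\times A)\setminus\Delta_A$ and the same diagonal check via an open $U\ni a$ with $U\cap A=\{a\}$; you dispose of points off $A$ with a neighborhood missing $A$, where the paper instead notes $\overline{R}\subseteq A\times A$ because $A$ is closed, and the $T_1$ equivalence is assembled the same way. Your (b) is the paper's closure argument in contrapositive form, since an $R$-free open $W\ni x$ as in Lemma~\ref{compactbounded} is exactly a witness that $(x,x)\notin\overline{R}$.
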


\begin{proof}
(a) Suppose $X$ contains an infinite set $A$ with no limit point. Then $A$ is closed and discrete in $X$. Let $R := A \times A \setminus \Delta_{A}$; we claim $R$ is closed. Since $A$ is closed, $\overline{R} \subseteq A \times A$. Suppose $(x,y) \in \overline{R}$. If $x \neq y$ then $(x,y) \in A \times A \setminus \Delta_A = R$. If $x = y = a \in A$, then since $a$ is not a limit point of $A$ there is an open $U \ni a$ with $U \cap A = \{a\}$, so $(U \times U) \cap (A \times A) = \{(a,a)\}$, which misses $R$; hence $(a,a) \notin \overline{R}$. So $R$ is a closed orthogonality relation with the infinite clique $A$. 

For (b), suppose $R$ has an infinite clique $A$, so that $A \times A \setminus \Delta_{A} \subseteq R$. Let $x$ be an $\omega$-limit point of $A$. Then for any open set $U \subseteq X$ with $x \in U$, $U \cap A$ contains infinitely many points of $A$ other than $x$.  
%\footnote{$U$ contains some $y_{1} \in A \setminus \{x\}$. Since $X$ is $T_{1}$, $\{y_{1}\}$ is closed, so $U \setminus \{y_{1}\}$ is also an open neighborhood of $x$, so it, too, contains a point $y_{2}$ of $A \setminus \{x\}$, and so on.} 
Now $(U \times U) \cap (A \times A)$ contains infinitely many pairs $(y,z)$ with $y \neq z$, hence infinitely many points of $A \times A \setminus \Delta_{A}$. Thus $(x,x)$ is a limit point of $A \times A \setminus \Delta_{A}$, hence of $R$; since $R$ is closed, $(x,x) \in R$, contradicting irreflexivity.
\end{proof}

For metrizable spaces, countable compactness is equivalent to compactness, so we have the

\begin{corollary}\label{metrizable}
Let $X$ be metrizable with finite cliques. Then $X$ is compact, and in particular has bounded cliques.
\end{corollary}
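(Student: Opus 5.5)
The plan is to combine Lemma~\ref{finitecc} with the standard fact that countable compactness and compactness coincide for metrizable spaces, and then apply Lemma~\ref{compactbounded}.

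First, a metrizable space is Hausdorff, hence $T_1$. By the final clause of Lemma~\ref{finitecc}, a $T_1$ space with finite cliques is countably compact; in fact part (a) alone suffices here, since it gives limit point compactness directly. So we may assume $X$ is limit point compact and metrizable, and fix a compatible metric $d$.

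Second, we show that $X$ is compact. This is the only step with real content, and it is classical: in a metric space, limit point compactness implies sequential compactness, which in turn implies compactness. If a citation is not acceptable, I would sketch it as follows.
\begin{itemize}
\item[(i)] \emph{Sequential compactness.} Take any sequence. If its range is finite, some term repeats infinitely often, giving a constant subsequence. Otherwise its range has a limit point $x$. Because balls in a metric space are open and the space is $T_1$, each ball $B(x,1/k)$ contains infinitely many terms, so we can extract a subsequence converging to $x$.
\item[(ii)] \emph{Total boundedness.} If some $\epsilon$ admitted no finite $\epsilon$-net, we could choose an infinite $\epsilon$-separated sequence. Such a sequence has no convergent subsequence, contradicting (i).
\item[(iii)] \emph{Lebesgue number.} Given an open cover, suppose no $\delta>0$ works as a Lebesgue number. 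Then there are points $x_n$ such that $B(x_n,1/n)$ lies in no member of the cover. A convergent subsequence $x_{n_k}\to x$ gives a contradiction, since some member $U$ of the cover contains a ball $B(x,r)$, and for large $k$ we have $B(x_{n_k},1/n_k)\subseteq B(x,r)\subseteq U$.
\item[(iv)] \emph{Conclusion.} Cover $X$ by finitely many balls of radius $\delta/2$, using (ii). By (iii) each such ball lies in a member of the cover, which yields a finite subcover.
\end{itemize}

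Finally, once $X$ is compact, Lemma~\ref{compactbounded} shows that every closed orthogonality relation on $X$ has bounded cliques. Hence $X$ has bounded cliques.

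I expect no genuine obstacle. The one point needing care is that all the metric arguments above use the $T_1$/Hausdorff property, and this is supplied by metrizability.
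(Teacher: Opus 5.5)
Your proposal is correct and follows essentially the same route as the paper. The paper gets countable compactness from Lemma \ref{finitecc}, cites the standard fact that countably compact metrizable spaces are compact, and then applies Lemma \ref{compactbounded}; your only differences are routing through part (a)'s limit point compactness and writing out the classical metric argument (which is sound) instead of citing it.
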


For $T_1$ spaces, our picture at this point is as follows:
\begin{eqnarray*}
\mbox{$X$ compact} & \Rightarrow & \mbox{$X$ has bounded cliques} \\
& \Rightarrow & \mbox{$X$ has finite cliques} \; \Leftrightarrow \;
X \mbox{ is countably compact}.
\end{eqnarray*}
The first implication is strict. { In fact, 
$\omega_1$ is an example of a non-compact space with bounded cliques. This follows from Theorem \ref{omegapower} below.}

Whether the second implication is strict is the {\bf main open question} of this note:

\begin{question}\label{mainq}
Is there a countably compact $T_1$ (or Hausdorff) space with a closed orthogonality relation having finite but unbounded cliques?\\
\end{question}

\begin{remark}\label{nonT1} It is not hard to find non-$T_1$ lp-compact spaces having infinite cliques (consider, e.g., $\N \times \{0,1\}$ with $\N$ discrete and $\{0,1\}$ indiscrete, and 
consider the relation of being in different fibres over $\N$). However, we do not know whether, in the general, non-$T_1$ setting, $\omega$-lp compactness is necessary for finite cliques.  The sufficiency of $\omega$-lp compactness gives us a non-$T_1$ version of Question 1: 
is there an $\omega$-lp compact space hosting a closed, irreflexive relation with cliques of arbitrary 
size? \\
\end{remark}

\begin{remark}[A dichotomy]\label{dichotomy}
{ Lemma \ref{finitecc}} also leads to an interesting dichotomy. Either Question \ref{mainq} has a positive answer, or  BCP coincides, for $T_1$ spaces, with countable compactness. In the latter case, BCP is {\em not finitely productive} in ZFC: the construction of \cite{HvMRS} yields countably compact groups $G, H$ with $G \times H$ not countably compact, hence (on that horn) with $G \times H$ failing even finite cliques while $G$ and $H$ have bounded cliques. 
\end{remark}

\section{A Galois Connection} 

Both the FCP and BCP can be reexpressed as covering properties. Let $\U$ be an open cover of $X$. Define a binary relation
\[x \perp_{\U} y \ \Leftrightarrow \ \neg \exists U \in \U \ \ x,y \in U.\]
Then $\perp_{\U}$ is a closed (possibly empty) orthogonality relation. Call a set $A \subseteq X$ {\em $\U$-separated} if no two distinct points of $A$ lie in a common member of $\U$; the $\perp_{\U}$-cliques are exactly the $\U$-separated sets.

Let $R$ be a closed orthogonality relation. Define a cover
\[\Cov(R) \ := \ \{ U \subseteq X \ | \ U \mbox{ open}, \ (U \times U) \cap R = \emptyset\},\]
the collection of totally non-$R$-related opens. (That this is a cover is shown as in the proof of Lemma \ref{compactbounded}.)

\begin{proposition}\label{galois}
$(\perp, \Cov)$ is an antitone Galois connection between open covers and closed orthogonalities, ordered by inclusion: for every open cover $\U$ and closed orthogonality $R$,
\[ R \subseteq \perp_{\U} \iff \U \subseteq \Cov(R). \]
\end{proposition}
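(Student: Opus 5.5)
The plan is to unwind both sides of the equivalence into a single symmetric condition: for every $U \in \U$ and every pair $(x,y) \in R$, it is not the case that both $x$ and $y$ lie in $U$. Equivalently, $(U \times U) \cap R = \emptyset$ for all $U \in \U$. Each direction is then a one-line translation through the definitions, so the proof is essentially bookkeeping. The only preliminary is to confirm that $\perp$ and $\Cov$ really land in the advertised posets; the Galois property does not depend on that, but the paper asserts it.

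First the preliminaries. $\perp_{\U}$ is irreflexive because $\U$ covers $X$, so each $x$ lies in some $U$ and hence $x \not\perp_{\U} x$. It is symmetric by definition. It is closed because its complement is $\bigcup_{U \in \U} U \times U$, which is open in $X^2$. Dually, $\Cov(R)$ consists of open sets by definition, and it covers $X$ by the argument in the proof of Lemma \ref{compactbounded}: each point has an $R$-free open neighborhood $W$.

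For the forward direction, assume $R \subseteq \perp_{\U}$ and let $U \in \U$. The set $U$ is open, and if $(x,y) \in (U\times U)\cap R$, then $x,y$ share the member $U$ of $\U$. That gives $x \not\perp_{\U} y$, contradicting $(x,y) \in R \subseteq \perp_{\U}$. So $U \in \Cov(R)$.

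For the reverse direction, assume $\U \subseteq \Cov(R)$ and let $(x,y) \in R$. If some $U \in \U$ contained both $x$ and $y$, then $(x,y) \in (U\times U)\cap R$, which contradicts $U \in \Cov(R)$. Hence $x \perp_{\U} y$.

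Antitonicity of each map is immediate from the definitions: enlarging $\U$ shrinks $\perp_{\U}$, and enlarging $R$ shrinks $\Cov(R)$. In any case it follows formally from the displayed equivalence. I expect no real obstacle here. The one point deserving care is that the order on covers is plain inclusion, not refinement, which is exactly what makes the equivalence hold verbatim.
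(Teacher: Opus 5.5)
Your proof is correct and matches the paper's: the paper's entire proof is the observation that both sides say no member of $\U$ contains an $R$-related pair, i.e.\ $(U\times U)\cap R=\emptyset$ for all $U\in\U$, which is exactly your reduction. Your checks that $\perp_{\U}$ is a closed orthogonality and that $\Cov(R)$ is an open cover are a harmless addition that the paper handles in the surrounding text.
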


\begin{proof}
Both sides say: no member of $\U$ contains an $R$-related pair.
\end{proof}

Thus $R$ is Galois-closed iff $R = \perp_{\Cov(R)}$ iff $R = \perp_{\U}$ for some $\U$, and $\U$ is Galois-closed iff $\U = \Cov(\perp_{\U})$ iff $\U = \Cov(R)$ for some closed orthogonality $R$. Since $R \subseteq \perp_{\Cov(R)}$ always, and enlarging a relation only enlarges its family of cliques, we get the useful

\begin{corollary}[Reduction to covers]\label{covers}
The suprema of clique sizes over all closed orthogonalities and over all relations of the form $\perp_{\U}$ coincide. Hence, $X$ has finite cliques iff for every open cover $\U$, every $\U$-separated set is finite, and $X$ has bounded cliques iff for every open cover $\U$ there is some $n \in \N$ with all $\U$-separated sets having size $\leq n$.  
\end{corollary}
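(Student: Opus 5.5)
The plan is to prove the two inequalities between the suprema separately, and then read off the covering characterizations of the finite and bounded clique properties.

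\emph{First inequality.} Every relation of the form $\perp_{\U}$, for an open cover $\U$, is itself a closed orthogonality, as observed when $\perp_{\U}$ was introduced. So the supremum of clique sizes over all relations $\perp_{\U}$ is at most the supremum over all closed orthogonalities. For completeness I will record why $\perp_{\U}$ is a closed orthogonality.
\begin{itemize}
\item It is symmetric by definition.
\item It is irreflexive because $\U$ covers $X$, so every $x$ lies in some $U\in\U$ together with itself.
\item Its complement is $\bigcup_{U\in\U} U\times U$, which is open in $X^2$.
\end{itemize}

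\emph{Second inequality.} Let $R$ be any closed orthogonality. Apply Proposition \ref{galois} with $\U = \Cov(R)$. Since $\Cov(R)\subseteq\Cov(R)$ trivially, the proposition gives $R\subseteq \perp_{\Cov(R)}$. Here $\Cov(R)$ is an open cover, by the argument of Lemma \ref{compactbounded}. Any $R$-clique $A$ satisfies $A\times A\setminus\Delta_A\subseteq R\subseteq\perp_{\Cov(R)}$, so $A$ is also a $\perp_{\Cov(R)}$-clique. Hence the supremum of $R$-clique sizes is at most the supremum over relations $\perp_{\U}$. Combining the two inequalities, the suprema coincide.

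\emph{The ``hence'' clauses.} These follow by applying the same two comparisons one relation at a time, rather than to global suprema. The one point needing care is that the properties quantify over each relation or cover separately, and the constant $n$ in the bounded case depends on the relation or cover.

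For finite cliques, forward direction: given a cover $\U$, the relation $\perp_{\U}$ is a closed orthogonality. Its cliques, which are exactly the $\U$-separated sets, are therefore finite.

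For finite cliques, reverse direction: given a closed orthogonality $R$, every $R$-clique is $\Cov(R)$-separated, hence finite.

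For bounded cliques the argument is identical. A bound $n$ for the cliques of $\perp_{\U}$ is a bound for the $\U$-separated sets. Conversely, a bound $n$ for the $\Cov(R)$-separated sets bounds the $R$-cliques.

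There is no real obstacle. The only substantive inputs are the closedness and irreflexivity of $\perp_{\U}$ and the fact that $\Cov(R)$ is a cover, and both have already been established.
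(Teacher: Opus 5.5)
Your proof is correct and follows the paper's argument. The relations $\perp_{\U}$ are themselves closed orthogonalities, and the unit $R \subseteq \perp_{\Cov(R)}$ of the Galois connection in Proposition~\ref{galois} makes every $R$-clique $\Cov(R)$-separated. Your observation that the ``hence'' clauses must be derived one relation at a time, because equality of the global suprema alone cannot separate bounded from merely finite cliques, makes explicit what the paper's compressed wording leaves implicit.
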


This re-proves Lemma \ref{compactbounded}: if $X$ is compact, some finite subfamily $U_1, \dots, U_n$ of $\U$ covers $X$, and a $\U$-separated set meets each $U_i$ at most once.

Corollary \ref{covers} brings us into contact 
with star-covering properties \cite{Cao,vDRRT}. 
A space $X$ is {\em strongly star-compact} iff for every open cover ${\mathcal U}$, there is a finite set $F$ with \[\st(F,{\mathcal U}) := \bigcup \{ U \in {\mathcal U} \mid F \cap U \not = \emptyset\} = X.\]

If $F$ is $\mathcal U$-separated and 
$y \not \in \st(F,{\mathcal U}) = \bigcup\{ U \in {\mathcal U} \mid F \cap U \not = \emptyset\}$, then 
$F \cup \{y\}$ is again $\mathcal U$-separated. Hence, 
if $F$ is a maximal $\mathcal U$-separated set (these 
exist by Zorn), we must have $\st(F,{\mathcal U}) = X$. Thus, since a ${\mathcal U}$-separated set is a clique 
of $\perp_{\mathcal U}$, we have 

\begin{corollary}\label{starcompact} A space with finite cliques is strongly star-compact.
\end{corollary}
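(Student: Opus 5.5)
The plan is to combine the Galois connection of Proposition \ref{galois} with a maximality argument. Fix an arbitrary open cover $\U$ of $X$. We need a finite set $F \subseteq X$ with $\st(F,\U) = X$. The relation $\perp_{\U}$ is a closed orthogonality relation, and its cliques are exactly the $\U$-separated subsets of $X$. Since $X$ has finite cliques, every $\U$-separated set is finite. This is also the content of Corollary \ref{covers}.

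Next, take a maximal $\U$-separated set $F$. Being $\U$-separated is a property of pairs, so the union of a chain of $\U$-separated sets is again $\U$-separated, and Zorn's lemma gives a maximal one. A cheaper route avoids Zorn entirely. Because every $\U$-separated set is finite, we can start from any point and add points greedily. If the process never stopped, it would produce an infinite $\U$-separated set, which is impossible. Since we only extend by single points, the process must halt at a finite set $F$ that cannot be extended. (If $X$ is empty, take $F=\emptyset$.) Either way, $F$ is finite by the finite clique property.

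Finally, we check that $\st(F,\U) = X$. Suppose instead that some $y \notin \st(F,\U)$. Then no member of $\U$ containing $y$ meets $F$, so $y \notin F$ and $y$ shares no member of $\U$ with any point of $F$. Hence $F \cup \{y\}$ is $\U$-separated, contradicting maximality. So the finite set $F$ witnesses strong star-compactness for $\U$, and since $\U$ was arbitrary, $X$ is strongly star-compact.

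No step is a real obstacle. The only point needing care is that the maximal set is finite. That follows because it is a $\perp_{\U}$-clique and $\perp_{\U}$ is a closed orthogonality, a fact already noted just before Proposition \ref{galois}.
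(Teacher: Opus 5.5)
Your proposal is correct and is essentially the paper's argument: a maximal $\U$-separated set $F$ must satisfy $\st(F,\U)=X$ (otherwise a point outside the star could be added), and $F$ is finite because it is a clique of the closed orthogonality $\perp_{\U}$. The paper obtains $F$ by Zorn's lemma; your greedy construction is a Zorn-free variant of the same maximality step, using only dependent choice, and it terminates precisely because the finite clique property excludes an infinite run.
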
 

For $T_1$ spaces, this reduces to the known 
fact that countable compactness implies strong star-compactness. The converse is false (e.g., $\R$ with the co-countable topology is strongly star-compact but 
not countably compact), and thus, so is the converse to Corollary \ref{starcompact}. 

We have found no named star-covering property matching ``bounded cliques,'' which we take as evidence that the main question is new.

\section{Cliques and Powers}

The following theorem supplies a wealth of 
examples of non-compact spaces with bounded cliques.

\begin{theorem}\label{omegapower}
If $X^{\omega}$ is countably compact, then $X$ has bounded cliques.
\end{theorem}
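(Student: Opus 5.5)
Suppose $X$ fails to have bounded cliques. By Corollary \ref{covers} there is an open cover $\U$ of $X$ such that for every $n \in \N$ there is a $\U$-separated set $A_n = \{a^n_1, \dots, a^n_n\}$ of size $n$. The plan is to assemble these finite sets into a single infinite clique of a closed orthogonality on $X^{\omega}$. Since $X^{\omega}$ is countably compact, Lemma \ref{finitecc}(b) says $X^{\omega}$ has finite cliques, so this will be a contradiction. No separation axioms enter, since Lemma \ref{finitecc}(b) is stated for arbitrary spaces via $\omega$-limit point compactness.

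The relation I would use on $X^{\omega}$ is
\[ \mathbf{x} \, S \, \mathbf{y} \iff \text{for every } n,\ x_n \perp_{\U} y_n \text{ or } x_n = y_n, \text{ and } x_m \neq y_m \text{ for some } m. \]
That formula is not obviously closed, because of the ``$x_n = y_n$'' clauses and the inequality. A cleaner choice is needed.

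Instead, index the $n$-th coordinate by $n$ and make the points of $A_n$ appear in separate coordinates. Define $\mathbf{p}_k \in X^{\omega}$ for $k \in \N$ by $(\mathbf{p}_k)_n := a^n_k$ when $k \le n$. For $k > n$, set $(\mathbf{p}_k)_n := a^n_n$ (any fixed point will do). Then for $j < k$ the coordinate $n = k$ gives $(\mathbf{p}_j)_k = a^k_j$ and $(\mathbf{p}_k)_k = a^k_k$, and these are $\perp_{\U}$-related. So the $\mathbf{p}_k$ are pairwise related under
\[ \mathbf{x} \, S \, \mathbf{y} \iff \exists n \ \ x_n \perp_{\U} y_n . \]
This $S$ is symmetric and irreflexive, but it is a union of closed sets, hence only $F_\sigma$, not closed. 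That is the main obstacle.

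To overcome it, I would pass to a single coordinate via a limit argument. Since $X^{\omega}$ is countably compact, the infinite set $\{\mathbf{p}_k\}$ has an $\omega$-limit point $\mathbf{q}$. For each $n$ choose $U_n \in \U$ containing $q_n$. The basic neighbourhood $W = U_1 \times \dots \times U_N \times X \times \cdots$ contains infinitely many $\mathbf{p}_k$. Two such $\mathbf{p}_j, \mathbf{p}_k$ have all their coordinates $n \le N$ in a common member of $\U$, so they cannot be $\perp_{\U}$-related in those coordinates. But the witnessing coordinate for $j<k$ is $n=k$, which may exceed $N$, so there is no contradiction yet.

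The fix is to diagonalize the construction so that each pair is witnessed in a \emph{bounded} set of coordinates. Use a product of the $\perp_{\U}$ relation into a single closed relation by choosing, in $X^{\omega}$, the cover $\mathcal{V}$ of basic open sets $\prod_{n\le N} U_n \times X^{\omega \setminus N}$ with $U_n \in \U$. By Corollary \ref{covers} it suffices to find arbitrarily large, indeed infinite, $\mathcal{V}$-separated sets. I would therefore construct the points so that each pair is separated in a coordinate that is \emph{small relative to both}. To do this, put a copy of $A_n$ into coordinate $n$ for every $n$, and choose indices by a Ramsey/compactness selection so that infinitely many points are pairwise separated in coordinates below any prescribed bound. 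Carrying out this selection cleanly is the step I expect to be hardest. I would first try to pick the $\omega$-limit point coordinatewise and iterate, and then derive the contradiction with Lemma \ref{finitecc}(b) applied to $\perp_{\mathcal{V}}$.
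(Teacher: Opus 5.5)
Your proposal does not reach a proof. The decisive ``Ramsey/compactness selection'' is left unspecified, and the set-up you commit to cannot supply it. In your indexing, coordinates of $X^{\omega}$ index the cliques and the points $\mathbf{p}_k$ index positions within them. So for each fixed $n$ the sequence $(\mathbf{p}_k)_n$ is eventually constant, equal to $a^n_n$. Hence $\mathbf{p}_k \to (a^1_1, a^2_2, a^3_3, \dots)$ in the product topology with no compactness hypothesis at all. The hypothesis on $X^{\omega}$ is never engaged, and any limit point carries no separation information.

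The cover $\mathcal{V}$ does not help either. Two points of $X^{\omega}$ lie in a common set $\prod_{n\le N}U_n\times X^{\omega\setminus N}$ (with $N\ge 1$) iff their first coordinates lie in a common member of $\U$: take $N=1$. So $\perp_{\mathcal{V}}$ is just $\perp_{\U}$ read off the first coordinate. An infinite $\mathcal{V}$-separated set is therefore equivalent to an infinite $\U$-separated subset of $X$, which is exactly what you set out to produce. In any case, all your $\mathbf{p}_k$ share the first coordinate $a^1_1$.

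The missing idea is to transpose the indexing and look for the infinite clique in $X$, not in $X^{\omega}$. Encode each whole clique as one point, $(\mathbf{y}_n)_i := a^n_{\min(i,n)}$. These points are distinct, so they have an $\omega$-limit point $\mathbf{q}$. For $i\neq j$ and open $U\ni q_i$, $V\ni q_j$, the basic set $\{\mathbf{z} : z_i\in U,\ z_j\in V\}$ contains $\mathbf{y}_n$ for infinitely many $n$, in particular for some $n\ge\max(i,j)$. That gives $(a^n_i,a^n_j)\in {\perp_{\U}}\cap(U\times V)$. Since $\perp_{\U}$ is closed in $X^2$, we get $q_i\perp_{\U} q_j$, so $\{q_i\}$ is an infinite $\U$-separated set in $X$. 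But $X$ is countably compact, being a continuous image of $X^{\omega}$, so Lemma~\ref{finitecc}(b) gives the contradiction.

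Closedness is only ever used in $X^2$, on pairs of coordinates, so you never need a closed relation on $X^{\omega}$. The single $\omega$-limit point does the diagonalization you were trying to build by hand. This is the paper's argument, which works directly with $R$ rather than $\perp_{\U}$.
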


\begin{proof} 
The hypothesis implies that $X$ itself is countably compact (being the continuous image of $X^{\omega}$ under a projection), so Lemma 2(b) applies: no  closed orthogonality on $X$ has an infinite clique.  
Suppose some closed orthogonality $R$ on $X$ admits  arbitrarily large finite cliques $C_n = \{x^n_1, \dots, x^n_n\}$ for every $n$.
 Define $y_n \in X^{\omega}$ by $(y_n)_i = x^n_{\min(i,n)}$. (Note that these 
 are distinct, as $y_{n}$ has $n$ distinct initial 
 entries, after which all entries are constant.) 
Let $q = (q_1, q_2, \dots)$ 
be an $\omega$-limit point of $\{y_n \mid n \geq 1\}$.
Fix $i \neq j$ and open neighborhoods $U$ and $V$ of $q_i$ and $q_j$, respectively.
The basic open set $\{z \in X^\omega : z_i \in U, \, z_j \in V\}$ is a neighborhood of $q$, hence contains $y_n$ for infinitely many $n$; choose such an $n \geq \max(i,j)$. Then $(y_n)_i = x^n_i$ and $(y_n)_j = x^n_j$ are distinct members of the clique $C_n$, so
\[ (x^n_i, x^n_j) \in R \cap (U \times V). \]
As $U, V$ were arbitrary, $(q_i, q_j) \in \overline{R} = R$; irreflexivity then gives $q_i \neq q_j$. So $\{q_i : i \geq 1\}$ is an infinite $R$-clique, 
a contradiction.
\end{proof}

\begin{corollary}\label{suffconds}
Each of the following implies that $X$ has bounded cliques:
\begin{enumerate}
\item[(a)] $X$ is sequentially compact;
\item[(b)] $X$ is $\omega$-bounded (every countable subset has compact closure);
\item[(c)] $X$ is $\u$-compact for some free ultrafilter $\u$ on $\omega$.
\end{enumerate}
\end{corollary}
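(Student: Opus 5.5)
Each of the three conditions is preserved under countable powers, so in every case I will show that $X^{\omega}$ is countably compact and then invoke Theorem \ref{omegapower}. The only thing to check is that the product preserves the relevant property, and that the property implies countable compactness. I do not expect to assume any separation axioms, so I will use the $\omega$-limit point formulation of countable compactness, which is equivalent for arbitrary spaces by the footnote to Lemma \ref{finitecc}.

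\emph{(a)} A countable product of sequentially compact spaces is sequentially compact, by the standard diagonal argument. Given a sequence $(z_n)$ in $X^{\omega}$, I pass to a subsequence whose first coordinates converge, then to a further subsequence whose second coordinates converge, and so on. The diagonal subsequence then converges coordinatewise, which is convergence in the product topology. A sequentially compact space is countably compact: given an infinite $A$, I pick distinct $a_n \in A$ and a convergent subsequence with limit $x$. Every neighborhood of $x$ contains a tail of that subsequence, hence infinitely many points of $A$, so $x$ is an $\omega$-limit point. No $T_1$ assumption is needed here.

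\emph{(b)} I will show that $X^{\omega}$ is again $\omega$-bounded. Let $D \subseteq X^{\omega}$ be countable. Each projection $\pi_i(D)$ is countable, so $K_i := \overline{\pi_i(D)}$ is compact. The closure of $D$ lies inside $\prod_i K_i$, which is compact by Tychonoff (no separation axioms are needed). Since a closed subset of a compact set is compact, $\overline{D}$ is compact. An $\omega$-bounded space is countably compact: for infinite $A$, take a countable infinite subset $\{a_n\}$. Then the closures $F_n$ of $\{a_k : k \ge n\}$ form a nested sequence of nonempty closed subsets of the compact set $\overline{\{a_n\}}$, so they have a common point, and as in the footnote this point is an $\omega$-limit point.

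\emph{(c)} Here $\u$-compact means every sequence $(x_n)$ has a $\u$-limit, i.e.\ a point $x$ such that $\{n : x_n \in U\} \in \u$ for each neighborhood $U$ of $x$. A sequence in $X^{\omega}$ has as $\u$-limit the point whose coordinates are $\u$-limits of the coordinate sequences. To check this, note that a basic neighborhood constrains only finitely many coordinates, and $\u$ is closed under finite intersections. So $X^{\omega}$ is $\u$-compact. A $\u$-compact space is countably compact: given infinite $A$, take distinct $a_n \in A$ and a $\u$-limit $x$. Since $\u$ is free, every set in $\u$ is infinite, so each neighborhood of $x$ contains $a_n$ for infinitely many $n$, and $x$ is an $\omega$-limit point.

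The main point requiring care is (b). I must make sure the compact closures in the factors suffice, and I must be careful that closures need not behave well in non-Hausdorff spaces. The argument above uses only that closed subsets of compact sets are compact, which holds in general, so I expect it to go through.
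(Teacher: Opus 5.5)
Your proposal is correct and follows the paper's proof: in each case you show $X^{\omega}$ is countably compact and apply Theorem~\ref{omegapower}. Part (a) is identical to the paper's; in (c) you prove directly the productivity of $\u$-compactness that the paper cites from Bernstein; and in (b) you pass through $\omega$-boundedness of $X^{\omega}$ rather than extracting a cluster point inside $K=\prod_i K_i$ as the paper does. The one step you leave implicit in (b), $\overline{D}\subseteq\prod_i K_i$, holds because $\prod_i K_i=\bigcap_i \pi_i^{-1}(K_i)$ is an intersection of closed sets (each $K_i$ being a closure), so the paper's caution that $K$ need not be closed is unnecessary and your shortcut is valid.
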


\begin{proof}
In each case $X^\omega$ is countably compact. (a) Sequential compactness is countably productive (diagonalize), and sequentially compact spaces are countably compact. (b) Let  $D = \{z^{n}\} \subseteq X^\omega$: by $\omega$-boundedness, the sets $K_i = \overline{\pi_{i}(D)}$ are compact in $X$, 
and thus $D \subseteq K:= \Pi_i K_i$, a compact 
subspace of $X^{\omega}$.  In our general setting, 
we cannot conclude that $K$ is closed; however, 
consider the sets $\overline{\{z^n \mid n \geq N\}} \subseteq K$ (the closure taken in $K$): these have the finite intersection property, so a cluster point for $D$ exists in $K$, and hence, in $X^{\omega}$. Thus, $X^{\omega}$ is countably compact. 
(c) For any free ultrafilter $\u$, $\u$-compactness is fully productive { \cite[Theorem 4.2]{Bern}} and implies countable compactness. 
\end{proof}

\begin{remark} 
It is consistent with ZFC that the hypothesis of Theorem 1 is strictly weaker than that of 
Corollary \ref{suffconds} (c).  Assuming the existence of $2^{\mathfrak c}$ pairwise incomparable selective ultrafilters, together with $2^{<2^{\mathfrak c}} = 2^{\mathfrak c}$ (both follow, e.g., from GCH), Tomita \cite{To}  showed that for every cardinal $\kappa \leq 2^{\mathfrak c}$, there exists a topological group $G$ with $G^{\gamma}$ countably compact for $\gamma < \kappa$, but $G^{\kappa}$ not. Taking $\kappa = 2^{\mathfrak c}$, we have $G^{\omega}$ countably compact but $G^{2^{\mathfrak c}}$ not. The 
Ginsburg-Saks theorem \cite{GS} tells us that 
if $G$ is $\u$-compact for any free ultrafilter, 
all cardinal powers of $G$ are countably compact; 
hence, $G$ is not $\u$-compact for any free ultrafilter. Theorem 1 applies to such a space, but Corollary \ref{suffconds} (c) does not. (As far as we know, the existence of a space $X$ with $X^{\omega}$ countably compact but 
some higher power not so, is open in ZFC.)
\end{remark} 

\begin{remark}\label{omegaone}
Theorem 1 implies that any sequentially compact but 
non-compact space $X$ has bounded cliques. The 
classic example is $\omega_1$. 
In this case, there is also a direct argument 
that bounds cliques explicitly. Given a closed orthogonality $R$ on $\omega_1$, there is $\alpha < \omega_1$ with
\[ R \cap \big( (\alpha, \omega_1) \times (\alpha, \omega_1) \big) = \emptyset. \]
(Otherwise recursively choose $(a_n, b_n) \in R$ with $\min(a_{n+1}, b_{n+1}) > \max(a_n, b_n)$; with $\gamma := \sup_n a_n = \sup_n b_n$ we get $(a_n,b_n) \to (\gamma,\gamma) \in \overline{R} = R$, contradicting irreflexivity.) A clique therefore has at most one member above $\alpha$, and its remainder is a clique of the restriction of $R$ to the compact space $[0,\alpha]$, which is bounded by Lemma \ref{compactbounded}. Note the displayed statement is best possible: $R$ itself need not be order-bounded, as the closed cofinal orthogonality
\[ R \;=\; \big( \{0\} \times (0,\omega_1) \big) \cup \big( (0,\omega_1) \times \{0\} \big) \]
shows (its cliques have size $2$; closedness at $(0,0)$ holds because $\{0\}$ is clopen).
%\end{example}
\end{remark}

\section{The structure of the problem}

In this section, we establish some further 
characteristics of spaces with, and countably compact 
spaces without, bounded cliques. These considerably focus the possible lines of attack on Question 1.

\begin{proposition}[Preservation]\label{preserve}
(i) Bounded cliques is closed-hereditary. (ii) Bounded cliques is preserved by continuous surjections.
\end{proposition}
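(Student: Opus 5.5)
For (i), I would argue directly from the definition by extending relations. Let $F \subseteq X$ be closed, and let $R$ be a closed orthogonality on $F$, meaning $R$ is closed in $F^2$. Since $F^2$ is closed in $X^2$, the relation $R$ is also closed in $X^2$. It remains symmetric and irreflexive when viewed as a relation on $X$. So $R$ is a closed orthogonality on $X$, and its cliques are the same sets as before. Bounded cliques on $X$ then bounds the cliques of $R$.

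The one point to verify is that "closed in $F^2$" passes to "closed in $X^2$". This is exactly where closedness of $F$ is used, through the product of closed sets being closed. No separation axioms are needed.

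For (ii), let $f\colon X \to Y$ be a continuous surjection with $X$ having bounded cliques, and let $R$ be a closed orthogonality on $Y$. I will pull $R$ back to
\[ S := (f\times f)^{-1}(R) \subseteq X^2. \]
Then $S$ is closed because $f \times f$ is continuous. It is symmetric, and it is irreflexive: $(x,x)\in S$ would give $(f(x),f(x))\in R$.

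Now take an $R$-clique $A \subseteq Y$. Using surjectivity, choose one preimage $x_a \in f^{-1}(a)$ for each $a\in A$. For $a\ne b$ we have $(f(x_a),f(x_b)) = (a,b)\in R$, so $(x_a,x_b)\in S$. The points $x_a$ are distinct because their images are distinct. Hence $\{x_a\}$ is an $S$-clique of cardinality $|A|$, and the bound $n$ for $S$ bounds the cliques of $R$.

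Alternatively, I could route (ii) through Corollary \ref{covers}: pull the open cover back along $f$, and lift a separated set by choosing preimages. The direct argument above is shorter, though.

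Neither step is a real obstacle. The only care needed in (ii) is the choice of one preimage per point, which uses the axiom of choice for infinite $A$. If one only wants finite bounds, choice over the finite cliques suffices. I would also remark that the same arguments show FCP is closed-hereditary and preserved by continuous surjections.
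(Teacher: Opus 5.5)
Your proposal is correct and follows the paper's proof essentially verbatim: for (i) you regard a closed orthogonality on the closed subspace as one on $X$ (using that $F^2$ is closed in $X^2$), and for (ii) you pull $R$ back along $f\times f$ and lift cliques by choosing one preimage per point. The only difference is cosmetic. The paper phrases (ii) contrapositively, lifting unbounded cliques on $Y$ to unbounded cliques on $X$, and it leaves implicit the closedness step in (i) that you spell out.
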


\begin{proof}
(i) If $C \subseteq X$ is closed and $R$ is a closed orthogonality on $C$, then $R$ is closed in $X^2$ and is an orthogonality relation on $X$ with the same cliques.

(ii) Let $f : X \twoheadrightarrow Y$ be a continuous surjection and $R$ a closed orthogonality on $Y$ with cliques of size $\geq n$ for every $n$. The pullback $R' := (f \times f)^{-1}(R)$ is closed, symmetric, and irreflexive (as $R$ is). If $A \subseteq Y$ is an $R$-clique, choose $x_a \in f^{-1}(a)$ for each $a \in A$; the $x_a$ are distinct and pairwise $R'$-related. So $R'$ has unbounded cliques and $X$ fails bounded cliques.
\end{proof}

\begin{proposition}[Separable reflection]\label{separable}
If some countably compact $T_1$ space carries a closed orthogonality with finite, unbounded cliques, then some {\em separable} such space does.
\end{proposition}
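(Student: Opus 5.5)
Given a countably compact $T_1$ space $X$ and a closed orthogonality $R$ with finite but unbounded cliques, the plan is to pass to the closure of a countable set. For each $n$ choose an $R$-clique $C_n$ with $|C_n| \ge n$, and let $D := \bigcup_n C_n$, a countable set. Put $Y := \overline{D}$ (closure in $X$). Then $Y$ is separable, since $D$ is dense in it. It is $T_1$ as a subspace of a $T_1$ space. It is countably compact because it is a closed subspace of a countably compact space: a countable open cover of $Y$ extends, by adding $X \setminus Y$, to a countable open cover of $X$.

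Next, restrict the relation: let $R_Y := R \cap (Y \times Y)$. This is closed in $Y^2$, being the intersection of a closed set with $Y^2$, and it is still symmetric and irreflexive. Every $R_Y$-clique is an $R$-clique, so all $R_Y$-cliques are finite. Each $C_n \subseteq D \subseteq Y$ is an $R_Y$-clique of size at least $n$, so the cliques of $R_Y$ are unbounded. Hence $Y$ is the required separable example.

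This is essentially the closed-hereditary observation of Proposition \ref{preserve}(i), run in reverse: one restricts to a closed subspace rather than extends from one. Finiteness of $R_Y$-cliques comes free either from finiteness of $R$-cliques or from Lemma \ref{finitecc}(b) applied to the countably compact space $Y$. The only step needing any care is countable compactness of $Y$, and that is the standard closed-subspace argument. No real obstacle is expected.
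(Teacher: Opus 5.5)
Your proof is correct and is essentially the paper's argument: choose cliques $C_n$ of size (at least) $n$, let $Y$ be the closure of the countable set $D=\bigcup_n C_n$, and restrict $R$ to $Y^2$. You also spell out the routine points the paper leaves implicit, namely that the closed subspace is countably compact and $T_1$, and that the restricted relation is closed, symmetric and irreflexive.
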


\begin{proof}
Let $R$ on $X$ have cliques $C_n$ with $|C_n| = n$, let $D = \bigcup_n C_n$, and let $Y = \overline{D}$. Then $Y$ is separable, closed in $X$ --- hence countably compact --- and $R \cap Y^2$ is a closed orthogonality on $Y$ with the same unbounded cliques.
\end{proof}

\begin{corollary}\label{metacompact}
A metacompact (in particular, paracompact, in particular, metrizable) $T_1$ space has finite cliques iff it has bounded cliques iff it is compact. 
\end{corollary}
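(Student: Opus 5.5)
The plan is to close the cycle of implications compact $\Rightarrow$ bounded cliques $\Rightarrow$ finite cliques $\Rightarrow$ compact, for a metacompact $T_1$ space $X$. The first implication is Lemma~\ref{compactbounded}, and the second is immediate from the definitions. So the only real content is that a metacompact $T_1$ space with finite cliques is compact. By Lemma~\ref{finitecc}, finite cliques is equivalent for $T_1$ spaces to countable compactness. The claim therefore reduces to the classical fact (Arens--Dugundji) that a countably compact metacompact space is compact. I would prove this directly in the paper's language, using Corollary~\ref{covers} to phrase everything via $\U$-separated sets.

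Let $\U$ be an open cover of $X$. By metacompactness, pass to a point-finite open refinement $\mathcal V$; it suffices to find a finite subcover of $\mathcal V$. Suppose there is none. The main step is to produce an infinite set with no $\omega$-limit point, contradicting countable compactness via Lemma~\ref{finitecc} and the footnote equivalence.

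The standard device is to take $\mathcal V$ irreducible (minimal). Point-finite covers have irreducible subcovers: one can Zorn downward, since point-finiteness ensures that the intersection of a chain of subcovers is still a cover. Each point lies in only finitely many members, so eventually the same ones stay in the chain. In an irreducible cover, each $V \in \mathcal V$ contains a point $x_V$ lying in no other member. If $\mathcal V$ is infinite, $A=\{x_V\}$ is infinite, and each $V$ meets $A$ only in $x_V$. Any point $p$ has a neighborhood $V \ni p$ from the cover, and that neighborhood meets $A$ in at most one point. Hence $A$ has no $\omega$-limit point, which is a contradiction. Thus $\mathcal V$ is finite, and $X$ is compact.

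The main obstacle is the Zorn argument for an irreducible subcover. Concretely, one must check that for a descending chain of subcovers, each $x$ is covered by the intersection. This holds because the finitely many members containing $x$ cannot all be dropped along the chain: if each were dropped at some stage, the stage at which the last one is dropped would leave $x$ uncovered. Finally, paracompact spaces are metacompact, since a locally finite refinement is point-finite, and metrizable spaces are paracompact by Stone's theorem. This also recovers Corollary~\ref{metrizable}.
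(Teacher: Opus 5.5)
Your proof is correct and follows the paper's route (Lemma~\ref{compactbounded}, the trivial implication, Lemma~\ref{finitecc} for countable compactness, then Arens--Dugundji); the one difference is that the paper simply cites Arens--Dugundji, whereas you prove it via an irreducible subcover of a point-finite refinement, and your Zorn argument for that is sound. You announced a phrasing via separated sets but did not use it, and it would strengthen the result: your set $A=\{x_V\}$ is itself $\mathcal V$-separated (a member of the cover containing both $x_V$ and $x_W$ would have to equal both $V$ and $W$), so it is an infinite clique of the closed orthogonality $\perp_{\mathcal V}$ and contradicts finite cliques directly via Corollary~\ref{covers}, which bypasses Lemma~\ref{finitecc} and shows the $T_1$ hypothesis is not needed for this corollary.
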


\begin{proof}
Finite cliques gives countable compactness (Lemma \ref{finitecc}); countably compact metacompact spaces are compact \cite{AD}; compactness gives bounded cliques (Lemma \ref{compactbounded}).
\end{proof}

Taken together, Theorem \ref{omegapower}, Corollary \ref{suffconds}, Proposition~\ref{separable}, and Corollary \ref{metacompact} severely constrain the shape of any witness $X$ to a positive answer to Question \ref{mainq}: it may be taken to be separable, and must be countably compact; 
 but it must also be non-compact, non-sequentially-compact, non-$\omega$-bounded, non-metacompact, not $\u$-compact for any free ultrafilter $\u$, and --- most tellingly --- must have $X^{\omega}$ {\em not} countably compact.

The following construction suggests a possible 
candidate for such a witness.  Partition $\omega$ into sets $A_n$ with $|A_n| = n$, and let $R_0$ join distinct points of a common cell. Call $S \subseteq \omega$ a {\em partial section} if $|S \cap A_n| \leq 1$ for all $n$. For a free ultrafilter $\mathfrak{u}$, a direct computation with basic neighborhoods gives
\[ (\u,\u) \in \overline{R_0} \; \mbox{ (closure in } (\beta\omega)^2 \mbox{)} \iff \mbox{no member of } 
\u \mbox{ is a partial section}. \]
(Note here that if some $B \in \u$ meets every cell at most $k$ times, then $B$ splits into at most $k$ partial sections, one of which lies in $\u$; so bounded transversals already witness membership in the good set.) 
Let $G := \{ \u \in \omega^* : \u \mbox{ contains a partial section} \}$. Then $G$ is open (a union of basic clopen sets $\widehat{S} = \{ \u \mid S \in \u\}$, $S$ a partial section) and dense (every infinite $B \subseteq \omega$ meets infinitely many cells, since cells are finite, { and} hence contains an infinite partial section), while $\omega^* \setminus G$ is nonempty (complements of finitely many partial sections $S_1, \dots, S_k$ retain at least $n - k$ points of $A_n$ { for $n > k$}, so these complements have the finite intersection property) and closed nowhere dense. Consequently:

{\em If there is a dense countably compact $Y \subseteq \omega^*$ with $Y \subseteq G$, then $X := \omega \cup Y$, with $R := \overline{R_0} \cap X^2$, answers Question \ref{mainq} positively}: $R$ is a closed orthogonality on $X$ (irreflexivity at points of $Y$ is exactly $Y \subseteq G$; points of $\omega$ are isolated), the cells $A_n$ are cliques of every size, and $X$ is countably compact. For density, it in fact suffices that $Y$ meet $\widehat{S}$ for every infinite partial section $S$, since every basic clopen $\widehat{B}$ contains such an $\widehat{S}$ --- so the required $Y$ lives, so to speak, entirely on the clopen skeleton of $G$.\\

\section{Chromatic Questions} 

Define $\chiop(R)$ as in Remark \ref{lemmaremarks}.  
Let us say that $X$ has {\em finite open chromatic number} if $\chiop(R) < \infty$ for every closed orthogonality $R$. Then
\[ \mbox{compact} \Rightarrow \mbox{finite } \chiop \Rightarrow \mbox{bounded cliques}\]
The first implication is strict: see the 
discussion of $\omega_1$ in Remark \ref{omegaone} in \S3; split at the $\alpha$ of the Remark and use the compactness of $[0,\alpha]$,  which is also clopen in $\omega_1$, so that a finite cover of $[0,\alpha]$ by $R$-free relatively open sets is already a family of $R$-free open subsets of $\omega_1$. The remaining piece $(\alpha,\omega_1)$ is itself open and $R$-free, and so serves as one further color class.

\begin{question} 
Is the second implication strict? 
\end{question}

For $X = S^{n-1} \subseteq \R^n$ with literal orthogonality, cliques are orthonormal systems, so the clique number is exactly $n$. The chromatic side has a substantial literature, rooted in Kochen--Specker \cite{KS} and running through Simmons \cite{Si} and  Lov\'asz \cite{Lo}: an {\em orthogonal coloring} of $S^2$ is a partition into parts containing no orthogonal pair. Godsil--Zaks \cite{GZ} proved that the chromatic number of the orthogonality graph of $S^2$ is exactly $4$. 

These colorings are unrestricted, but our $\chiop$ demands {\em open} color classes, a stronger constraint. The eight open octants are orthogonality-free and cover 
$S^2 \setminus C$ where 
$C$ is the union of the three coordinate great circles. 
Rotations take orthogonality-free opens to orthogonality-free opens. Choosing  $\rho_2$, $\rho_3$ with $C \cap \rho_2(C) \cap \rho_3(C) = \emptyset$ (
a generic $\rho_2$ gives $C \cap \rho_2(C)$ finite; then choose a $\rho_3$ with $\rho_{3}(C)$ avoiding these finitely many points) gives us 
\[ 4 \; = \; \chi(S^2, \perp) \; \leq \; \chiop(S^2, \perp) \; \leq \; 24.\]

We have not found the open chromatic number of literal orthogonality on $S^2$ treated in the literature, 
which suggests:
%\end{remark}

\begin{question}\label{sphereq}
What is the least number of open orthogonality-free sets covering $S^2$? More generally, determine or estimate $\chiop(S^{n-1}, \perp)$.\\
\end{question}

\begin{remark} Holmsen and Lee \cite{HL} showed that an orthogonal $4$-coloring of $S^2$ all of whose  color classes have non-empty interior is necessarily an  octahedral one, with the interiors of the color-classes the unions of antipodal open octants. This suggests that $\chiop(S^2) \geq 5$, but we have not been able to verify this. 
\end{remark} 

{\bf Machine Assistance} This paper began as an informal note (essentially comprising sections 1 and 2 and a slightly mangled version of the $\omega_1$ example), shared between the authors in 2024. Recent dialogues with  Anthropic's Claude (July and August, 2026) pointed us to the relevant literature, and supplied Corollary 3 and the results of Sections 3, 4 and 5 (in particular, Theorem 1 and its Corollary), which we have checked carefully, and revised for readability.

{\sloppy
{\bf Verification} The mathematical content of this note has been formalized in Lean 4 against
Mathlib (toolchain \texttt{v4.28.0}) and machine-checked, with no {\tt sorry}s and no axioms
beyond Lean's standard three ({\tt propext}, {\tt Classical.choice}, {\tt Quot.sound}):
Lemmas \ref{compactbounded} and \ref{finitecc} with their footnotes, Corollary \ref{metrizable},
Proposition \ref{galois} with Corollary \ref{covers}, Corollary \ref{starcompact} and the failure
of its converse, the dichotomy of Remark \ref{dichotomy}, Theorem \ref{omegapower} and Corollary \ref{suffconds},
the $\omega_1$ example of Remark \ref{omegaone}, Propositions \ref{preserve} and \ref{separable},
Corollary \ref{metacompact}, the profile of a witness, the $\beta\omega$ construction of \S 4
together with its conditional consequence, the implications of \S 5, the clique number of
$S^{n-1}$, and the bound $\chiop(S^2,\perp) \leq 24$. Question \ref{mainq}, the question of \S 5
as to strictness of the second implication, and the two subsidiary questions of
Remark \ref{nonT1}, are stated formally and left unresolved; Question \ref{sphereq}, which asks
for a value rather than a truth value, is not formalized. Results quoted from the
literature are not formalized; in particular the lower bound $\chi(S^2,\perp) = 4$ is not, so the
formal sphere statement is $3 \leq \chiop(S^2,\perp) \leq 24$, and the $\leq 24$ bound is
obtained from three explicit orthonormal frames of $\R^3$ in place of the genericity argument
given above. The development is available at
\texttt{https://github.com/\allowbreak DavidVFeldman/\allowbreak compactness-clique-size},
archived at \texttt{https://doi.org/\allowbreak 10.5281/zenodo.21910510}.\par}

\end{document}